\numberwithin{equation}{section} 
\newtheorem{theorem}{Theorem}[section] 
\newtheorem{proposition}[theorem]{Proposition} 
\newtheorem{lemma}[theorem]{Lemma}
\newtheorem{definition}[theorem]{Definition}
\begin{document} 
 
\title[Solvable subgroups of semisimple algebraic groups]{On the maximal
solvable subgroups of semisimple algebraic groups}
 
\author[H. Azad]{Hassan Azad}

\address{Department of Mathematics and Statistics,
King Fahd University of Petroleum and Minerals,
Dhahran 31261, Saudi Arabia}

\email{hassanaz@kfupm.edu.sa}

\author[I. Biswas]{Indranil Biswas} 
 
\address{School of Mathematics, Tata Institute of 
Fundamental Research, Homi Bhabha Road, Mumbai 400005, India} 
 
\email{indranil@math.tifr.res.in} 
 
\author[P. Chatterjee]{Pralay Chatterjee} 
 
\address{The Institute of Mathematical Sciences, C.I.T. Campus, 
Taramani, Chennai 600113, India} 

\email{pralay@imsc.res.in} 
 
\subjclass[2000]{20G15, 22E25}
 
\keywords{Algebraic groups, maximal solvable subgroup, split torus,
parabolics}
 
\begin{abstract} 
Let $G$ be a semisimple affine algebraic group defined over a field $k$ of 
characteristic zero. We describe all the maximal connected solvable subgroups
of $G$, defined over $k$, up to conjugation by rational points of $G$.
\end{abstract}
 
\maketitle 

\section{Introduction}

Let $G$ be a semisimple affine algebraic group defined over a field $k$ of 
characteristic zero (the field need not be algebraically closed).
The group of $k$--rational points of $G$ will be denoted by $G(k)$. Our aim 
here is to describe all the maximal connected solvable $k$--subgroups of $G$ 
up to conjugation by elements in $G(k)$ in terms of certain solvable 
$k$--subgroups of some of the standard parabolic $k$--subgroups containing 
a fixed minimal $k$--parabolic subgroup. 

Similar works have been done earlier considering different set-ups. When 
$k\,=\, {\mathbb R}$, the analogous problem for real semisimple Lie algebras 
and real semisimple algebraic groups were studied in \cite[Theorem 4.1]{Mo} and 
\cite[Section 3]{Ma} respectively. It was proved by Platonov, \cite{P}, that 
the number of conjugacy classes of maximal solvable subgroups (not necessarily 
connected) in an algebraic group over an algebraically closed field is finite. 

First assume that $G$ is $k$--anisotropic. Then the group of $k$--rational 
points $G(k)$ has no unipotent elements. Therefore, the maximal connected 
solvable $k$--subgroups of $G$ are
precisely the maximal tori defined over $k$; these tori are all 
$k$--anisotropic. Thus, in this case the maximal connected solvable 
$k$--subgroups of $G$ are precisely the maximal $k$--anisotropic tori of $G$.
Although the problem of finding $G(k)$--conjugacy classes of 
maximal $k$--tori in $k$--anisotropic groups may be tractable for special
cases of $k$, resolving the problem for a general $k$ seems very difficult to 
the best of our knowledge. In what follows we assume that $G$ is 
$k$--isotropic.

The reader is referred to Section \ref{sec-notation}
for the definitions and notation used here.
Fix a maximal $k$--split torus $S$ of $G$. Let $\Delta$ be the set of 
$k$--roots with respect to $S$, and let $\Delta^+\, \subset\, \Delta$ be the 
positive roots given by a fixed minimal $k$--parabolic subgroup of $G$ 
containing $S$. Let $\Phi\, \subset\, \Delta^+$ be the subset consisting 
the simple roots. Any subset $\Theta$ of $\Phi$ defines a $k$--parabolic subgroup 
$P_\Theta$ of $G$. Define $S_\Theta$ as in \eqref{f8}. Let $Z_G(S_\Theta)$ be 
the centralizer of $S_\Theta$ in $G$.

We need to make a definition for the convenience of exposition. A subset 
$\Theta\, \subset\, \Phi$ is called {\it admissible} if $[Z_G(S_\Theta)\, 
,Z_G(S_\Theta)]$ admits a maximal $k$--torus which is $k$--anisotropic.
Let $\Theta\, \subset\, \Phi$ be an admissible subset, and let $T$ 
be a maximal $k$--torus of $[Z_G(S_\Theta)\, ,Z_G(S_\Theta)]$ which is 
$k$--anisotropic. It is clear that $TZ(Z_G(S_\Theta)) R_u(P_\Theta)$ is a 
connected solvable $k$--subgroup of $G$. 

We prove the following theorem (see Theorem \ref{thm1}, Theorem \ref{thm2}
and Proposition \ref{thm3}):

\begin{theorem}\label{thm0}
Let $\Theta\, \subset\, \Phi$ be an admissible subset, and let $T$ be 
a maximal $k$-torus of $[Z_G(S_\Theta)\, ,Z_G(S_\Theta)]$ which is $k$--anisotropic.
Then the subgroup $$B_{\Theta,T}\, :=\, TZ(Z_G(S_\Theta)) R_u(P_\Theta)$$ is a 
maximal connected solvable $k$--subgroup of $G$.

For any maximal connected solvable $k$--subgroup $B$ of $G$, there is an 
admissible subset $\Theta$ and a maximal anisotropic $k$-torus $T\, \subset\, 
[Z_G(S_\Theta)\, ,Z_G(S_\Theta)]$ such that $B$ 
is conjugate to $B_{\Theta,T}$ (defined above) by some element in $G(k)$.

For admissible subsets $\Theta_i$, $i\,=\, 1\, ,2$, and maximal 
$k$--torus $T_i$ of $[Z_G(S_{\Theta_i})\, ,Z_G(S_{\Theta_i})]$ which is
$k$--anisotropic, the two subgroups 
$B_{\Theta_1, T_1}$ and $B_{\Theta_2, T_2}$ of $G$ are conjugate 
by some element in $G(k)$ if and only if $$\Theta_1\,= \,\Theta_2
~\,~ \text{ and }~\,~ c \,T_1 \, c^{-1} \,= \,T_2$$ for some $c\,\in\,
P_{\Theta_1}(k)\,=\,P_{\Theta_2} (k)$ satisfying the condition that $c \, 
Z_G (S_{\Theta_1}) \, c^{-1} \,=\, Z_G (S_{\Theta_1})$.
\end{theorem}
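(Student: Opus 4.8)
The plan is to prove the three assertions in turn, resting everything on the Borel--Tits theorem together with the following fact, which I will call $(\star)$: \emph{for a connected reductive $k$-group $M$ and a maximal $k$-torus $T\subseteq M$, the torus $T$ is a maximal connected solvable $k$-subgroup of $M$ if and only if the maximal torus $(T\cap[M,M])^{\circ}$ of $[M,M]$ is $k$-anisotropic.} For ``if'': if $T\subseteq C\subseteq M$ with $C$ connected solvable over $k$, then $T$ is a maximal torus of $C$, so $C=T\ltimes R_u(C)$ with $R_u(C)\subseteq[M,M]$ normalised by $(T\cap[M,M])^{\circ}$; were $R_u(C)\neq 1$, Borel--Tits (inside $[M,M]$) would force this anisotropic torus into a proper $k$-parabolic of $[M,M]$, hence to contain the nontrivial $k$-split central torus of that parabolic's Levi, a contradiction. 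For ``only if'': if $(T\cap[M,M])^{\circ}$ contained a nontrivial $k$-split torus, then centralising the latter exhibits a proper $k$-parabolic of $[M,M]$ through $(T\cap[M,M])^{\circ}$, and adjoining its unipotent radical (and $Z(M)^{\circ}$) gives a connected solvable $k$-subgroup of $M$ strictly larger than $T$.

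\emph{First assertion.} Since $Z(Z_G(S_\Theta))$ lies in every maximal torus of $L:=Z_G(S_\Theta)$, the torus $T':=T\,Z(Z_G(S_\Theta))$ is a maximal torus of $L$, hence of $G$, and $B_{\Theta,T}=T'\ltimes R_u(P_\Theta)$. Let $B'\supseteq B_{\Theta,T}$ be connected solvable over $k$. As $T'$ is a maximal torus of $G$ contained in $B'$, it is a maximal torus of $B'$, so $B'=T'\ltimes R_u(B')$. Let $\Psi$ be the set of $T'$-roots occurring in $\mathrm{Lie}(R_u(B'))$: it contains the roots occurring in $R_u(P_\Theta)$ (because $R_u(P_\Theta)\subseteq R_u(B')$) and satisfies $\Psi\cap(-\Psi)=\emptyset$ (because $R_u(B')$ is unipotent), hence $\Psi$ contains no root of the opposite unipotent radical and so consists of roots of $P_\Theta$; therefore $\mathrm{Lie}(B')\subseteq\mathrm{Lie}(P_\Theta)$ and $B'\subseteq P_\Theta$. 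Projecting modulo $R_u(P_\Theta)$ yields a connected solvable $k$-subgroup $\overline{B'}\supseteq T'$ of $L$ with $B'=\overline{B'}\ltimes R_u(P_\Theta)$; since $(T'\cap[L,L])^{\circ}=T$ is $k$-anisotropic by hypothesis, $(\star)$ gives that $T'$ is maximal connected solvable in $L$, so $\overline{B'}=T'$ and $B'=B_{\Theta,T}$.

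\emph{Second assertion.} Let $B$ be maximal connected solvable over $k$. By Borel--Tits there is a $k$-parabolic $P_0$ with $B\subseteq N_G(R_u(B))\subseteq P_0$; then $B\,R_u(P_0)$ is connected solvable over $k$ and contains $B$, so $R_u(P_0)\subseteq B$. Choose $P$ minimal among $k$-parabolics with $R_u(P)\subseteq B$, let $\pi\colon P\to L_P$ be the projection onto a Levi, and set $\overline B:=\pi(B)$; then $B=\overline B\ltimes R_u(P)$, and $\overline B$ is maximal connected solvable in $L_P$ (a strictly larger such $C$ would give $\pi^{-1}(C)\supsetneq B$ connected solvable over $k$). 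If $R_u(\overline B)\neq 1$, Borel--Tits inside $L_P$ together with the maximality of $\overline B$ produces a proper $k$-parabolic $Q\subsetneq L_P$ with $R_u(Q)\subseteq\overline B\subseteq Q$, whence $\pi^{-1}(Q)$ is a $k$-parabolic properly contained in $P$ with $R_u(\pi^{-1}(Q))\subseteq B$, contradicting minimality of $P$. Hence $\overline B$ is a maximal $k$-torus $T_0$ of $L_P$ that is maximal connected solvable in $L_P$; by $(\star)$, the maximal torus $T:=(T_0\cap[L_P,L_P])^{\circ}$ of $[L_P,L_P]$ is $k$-anisotropic and $T_0=T\,Z(L_P)$. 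Conjugating $P$ to a standard parabolic $P_\Theta$ by an element of $G(k)$ then shows that $\Theta$ is admissible and that $B$ is $G(k)$-conjugate to $B_{\Theta,T}$.

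\emph{Third assertion.} ``If'' is immediate: an element $c\in P_\Theta(k)$ normalising $Z_G(S_\Theta)$ with $cT_1c^{-1}=T_2$ preserves $R_u(P_\Theta)$ and $Z(Z_G(S_\Theta))$, so $cB_{\Theta,T_1}c^{-1}=B_{\Theta,T_2}$. Conversely, suppose $gB_{\Theta_1,T_1}g^{-1}=B_{\Theta_2,T_2}$ with $g\in G(k)$. Comparing unipotent radicals gives $gR_u(P_{\Theta_1})g^{-1}=R_u(P_{\Theta_2})$, hence $gP_{\Theta_1}g^{-1}=P_{\Theta_2}$; as distinct standard $k$-parabolics are not conjugate, $\Theta_1=\Theta_2=:\Theta$ and $g\in P_\Theta(k)$. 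Now $Z_G(S_\Theta)$ and $gZ_G(S_\Theta)g^{-1}$ are Levi $k$-subgroups of $P_\Theta$, so there is a unique $u\in R_u(P_\Theta)(k)$ with $ug\,Z_G(S_\Theta)\,g^{-1}u^{-1}=Z_G(S_\Theta)$; set $c:=ug\in P_\Theta(k)$. Then $c$ normalises $Z_G(S_\Theta)$, and $cB_{\Theta,T_1}c^{-1}=uB_{\Theta,T_2}u^{-1}=B_{\Theta,T_2}$ since $u\in R_u(P_\Theta)\subseteq B_{\Theta,T_2}$. Projecting modulo $R_u(P_\Theta)$, $c$ carries the maximal torus $T_1Z(Z_G(S_\Theta))$ of $Z_G(S_\Theta)$ onto $T_2Z(Z_G(S_\Theta))$, hence carries its semisimple part $T_1=(T_1Z(Z_G(S_\Theta))\cap[Z_G(S_\Theta),Z_G(S_\Theta)])^{\circ}$ onto $T_2$, as required. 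The step I expect to be the main obstacle is $(\star)$ --- isolating exactly when a maximal $k$-torus is already maximal among connected solvable $k$-subgroups --- together with the structural bookkeeping in the last two assertions (using that all $k$-parabolics are $G(k)$-conjugate to standard ones and that the Levi $k$-subgroups of a $k$-parabolic $P$ form a single $R_u(P)(k)$-orbit) needed to pass to a standard parabolic and to a conjugating element normalising the chosen Levi.
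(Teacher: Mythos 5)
Your proof is correct, and it rests on the same two pillars as the paper's: the Borel--Tits theorem controlling normalizers of nontrivial unipotent $k$-subgroups, and the dichotomy between $k$-split and $k$-anisotropic tori. The packaging is genuinely different in places, though. Your fact $(\star)$ fuses the paper's two one-directional lemmas --- Lemma \ref{lem1} (an anisotropic maximal torus of a semisimple $k$-group normalizes no nontrivial unipotent $k$-subgroup) and Lemma \ref{lem3} (a nontrivial split subtorus produces such a normalized unipotent subgroup) --- into a single equivalence at the level of the reductive Levi, so that both the maximality of $B_{\Theta,T}$ and the anisotropy of $T$ in the classification fall out of one statement; the paper instead applies Lemma \ref{lem1} directly to $[Z_G(S_\Theta),Z_G(S_\Theta)]\cap R_u(B)$ in Theorem \ref{thm1} and invokes Lemma \ref{lem3} separately in Theorem \ref{thm2}. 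Your root-space argument for $B'\subseteq P_\Theta$ is essentially the paper's Lemma \ref{lem2} (the ${\mathfrak s}{\mathfrak l}_2$ obstruction in a solvable Lie algebra). In the second assertion you replace the paper's identification of $N_G(R_u(B))$ as a parabolic with $R_u\bigl(N_G(R_u(B))\bigr)=R_u(B)$ (via \cite[Corollaire 3.2]{BT}) by a descent on parabolics $P$ with $R_u(P)\subseteq B$; this works and avoids that corollary, but you must minimize over parabolics that also contain $B$ (your first step shows this set is nonempty, and every parabolic produced in the descent does contain $B$), since $R_u(P)\subseteq B$ alone does not force $B\subseteq P$ and $\pi(B)$ would otherwise be undefined; you should also carry out explicitly the final $R_u(P_\Theta)(k)$-conjugation taking your Levi $L_P$ onto the standard Levi $Z_G(S_\Theta)$, which you mention but do not perform. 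In the third assertion you deduce $gP_{\Theta_1}g^{-1}=P_{\Theta_2}$ from the unipotent radicals via $N_G(R_u(P_\Theta))=P_\Theta$, whereas the paper extracts $cS_{\Theta_1}c^{-1}=S_{\Theta_2}$ from the $k$-split parts of the tori; both are valid, and your normalization of $c$ by the unique $u\in R_u(P_\Theta)(k)$ carrying one Levi to the other matches the paper's adjustment by the element $\beta$.
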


We also give a criterion for an element of $G(k)$ to lie in some maximal 
connected solvable $k$--subgroup of $G$ (see Theorem \ref{C}).

\section{Notation and preliminaries}\label{sec-notation}

In this section we fix some notation, which will be used throughout. For the 
generalities in the theory of algebraic groups that are used here, 
the reader is referred to \cite{BT2} and \cite[Chapter V]{Bo}.
As before, $k$ is a field of characteristic zero, which is not necessarily 
algebraically closed.

The {\it center} of a group $H$ is denoted by $Z(H)$. Let $H$ be a linear 
algebraic group defined over $k$. We denote its Lie algebra by $\text{Lie}(H)$. 
The connected component of $H$, containing the identity element, is denoted by 
$H^0$. For a subgroup $J$ of $H$, and a subset $S$ of $H$, by $Z_{J} (S)$ we 
will denote the subgroup of $J$ that commutes with all the elements of $S$. The 
{\it normalizer} of $J$ in $H$ is denoted by $N_H (J)$.

Let $G$ be a semisimple algebraic group defined over $k$. If
$G$ admits a $k$-split torus of positive dimension, then
$G$ is said to be $k$--\textit{isotropic}; otherwise, $G$ is called 
$k$--\textit{anisotropic}. 

Let $S\, \subset\, G$ be a maximal $k$--split torus. The group of characters
of $S$ will be denoted by $X(S)$.

We fix some notation:
\begin{itemize}
\item $P$ is a fixed minimal $k$--parabolic subgroup of $G$ containing $S$. 

\item $\Delta\, \subset \, X(S)$
is the set of $k$--roots with respect to $S$.

\item $\Delta^+\, \subset\, \Delta$
is the set of positive roots given by $P$.

\item $\Phi\, \subset\, \Delta^+$ is the subset consisting of simple roots 
of $\Delta^+$.
\end{itemize}

For any $\Theta\, \subset\, \Phi$, define
\begin{equation}\label{f8}
S_\Theta\, :=\, (\bigcap_{\chi\in \Theta} \text{kernel}(\chi))^0\, .
\end{equation}
This $k$--split torus $S_\Theta$ is called the \textit{standard $k$--split 
torus of type $\Theta$}.
Let $Z_G(S_\Theta)$ denote the centralizer of $S_\Theta$ in $G$.
The standard $k$--parabolic subgroup of $G$, containing $P$, corresponding to 
$\Theta$ will be 
denoted by $P_\Theta$ (see \cite[p. 197, Section 14.17]{Bo} when $k = \bar{k}$ and
\cite[p. 233, Section 21.11]{Bo} for a general $k$). We recall that
$$
P_\Theta\,=\, Z_G(S_\Theta)\cdot R_u(P_\Theta)\, ,
$$
where $R_u(P_\Theta)$ is the unipotent radical of $P_\Theta$.

It is known that
\begin{equation}\label{A}
Z(Z_G(S_\Theta))^0 \,=\, A\cdot S_\Theta\, ,
\end{equation}
where $A$ is 
a $k$--anisotropic torus; see \cite[Proposition 1.1]{DT}. 
Therefore, the $k$--split part of
$Z(Z_G(S_\Theta))^0$ is $S_\Theta$. In particular, $Z_G(S_\Theta)/S_\Theta$ admits
a $k$--anisotropic maximal torus if and only if $[Z_G(S_\Theta)\, ,
Z_G(S_\Theta)]$ admits a $k$--anisotropic maximal torus.

\begin{definition}
{\rm A subset $\Theta\, \subset\, \Phi$ is called} admissible {\rm if
$[Z_G(S_\Theta)\, ,Z_G(S_\Theta)]$ admits a maximal $k$--torus which is 
$k$--anisotropic. In the case when $k = {\mathbb R}$ this is equivalent to the
definition of admissible subsets of $\Phi$ given in \cite[Definition 5.8]{Ch}.}
\end{definition}

\section{A collection of maximal connected solvable subgroups}\label{sec1}

\begin{lemma}\label{lem1} Let $G$ be a semisimple algebraic group defined over $k$.
Suppose that $G$ admits a maximal $k$--torus, say $T$, which is 
$k$--anisotropic. Then there is no nontrivial unipotent $k$-subgroup $U\, 
\subset\, G$ such that $T\, \subset\, N_G(U)$.
\end{lemma}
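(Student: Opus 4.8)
The plan is to argue by contradiction and to reduce everything to an elementary fact about root systems. Suppose $U\,\subseteq\,G$ is a nontrivial unipotent $k$-subgroup with $T\,\subseteq\,N_G(U)$. Since $\mathrm{char}\,k\,=\,0$, the group $U$ is connected of positive dimension, so $\mathfrak{u}\,:=\,\text{Lie}(U)$ is a nonzero nilpotent $k$-Lie-subalgebra of $\mathfrak{g}\,:=\,\text{Lie}(G)$ consisting of $\mathrm{ad}$-nilpotent elements, and, because $T$ normalizes $U$, the adjoint action of $T$ preserves $\mathfrak{u}$ and acts on it by Lie-algebra automorphisms. The crucial reduction --- and the step I would worry about most --- is to pass from $\mathfrak{u}$ to its center $\mathfrak{z}$: being the center of a nonzero nilpotent Lie algebra in characteristic zero, $\mathfrak{z}$ is a nonzero \emph{abelian} ideal; it is defined over $k$, and, being characteristic in $\mathfrak{u}$, it is $T$-stable. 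Working with the abelian $\mathfrak{z}$ rather than with $\mathfrak{u}$ itself is what makes the final root-system computation a one-liner; for a general nilpotent subalgebra of the form $\bigoplus_\alpha\mathfrak{g}_\alpha$ the corresponding statement (that the roots occurring do not have $0$ in their convex hull) remains true but is much less transparent.

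Next I would extend scalars to $\overline{k}$ and put $\Gamma\,=\,\mathrm{Gal}(\overline{k}/k)$. Regarding $T$ as a maximal torus of the reductive group $G_{\overline{k}}$, we have the root-space decomposition $\mathfrak{g}\otimes_k\overline{k}\,=\,\mathfrak{h}\oplus\bigoplus_{\alpha\in R}\mathfrak{g}_\alpha$, where $\mathfrak{h}\,=\,\text{Lie}(T_{\overline{k}})$ and $R\,\subseteq\,X(T)$ is the root system of $(G,T)$. Any $T$-stable subspace is the direct sum of some of the one-dimensional spaces $\mathfrak{g}_\alpha$ with a subspace of $\mathfrak{h}$; but a nonzero element of $\mathfrak{h}$ is $\mathrm{ad}$-semisimple and --- since $Z(\mathfrak{g})\,=\,0$ --- not $\mathrm{ad}$-nilpotent, so $(\mathfrak{z}\otimes\overline{k})\cap\mathfrak{h}\,=\,0$. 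Hence $\mathfrak{z}\otimes\overline{k}\,=\,\bigoplus_{\alpha\in R_Z}\mathfrak{g}_\alpha$ for a nonempty subset $R_Z\,\subseteq\,R$. Since $T$ is defined over $k$, the group $\Gamma$ acts on $X(T)$ and permutes the weight spaces by $\sigma(\mathfrak{g}_\alpha)\,=\,\mathfrak{g}_{\sigma\alpha}$; and since $\mathfrak{z}$ is defined over $k$, the set $R_Z$ is $\Gamma$-stable. Moreover, because $\mathfrak{z}$ is abelian and $(\mathfrak{z}\otimes\overline{k})\cap\mathfrak{h}\,=\,0$, the set $R_Z$ is ``abelian'': for $\alpha,\beta\in R_Z$ one can have neither $\alpha+\beta\in R$ (otherwise $[\mathfrak{g}_\alpha,\mathfrak{g}_\beta]\,=\,\mathfrak{g}_{\alpha+\beta}$ would be a nonzero subspace of $\mathfrak{z}$) nor $\alpha\,=\,-\beta$ (otherwise $[\mathfrak{g}_\alpha,\mathfrak{g}_{-\alpha}]$ would give a nonzero element of $\mathfrak{h}$ lying inside $\mathfrak{z}$); in particular $-\beta\,\notin\,R_Z$ whenever $\beta\in R_Z$.

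The contradiction is now immediate. Put $\chi\,:=\,\sum_{\alpha\in R_Z}\alpha\,\in\,X(T)$. As $R_Z$ is nonempty and $\Gamma$-stable, $\chi$ is $\Gamma$-invariant, hence $\chi$ is a $k$-rational character of $T$. I claim $\chi\,\neq\,0$: fix $\beta\in R_Z$ and use a Weyl-invariant inner product $(\cdot,\cdot)$ on $X(T)\otimes\mathbb{R}$; if $\chi\,=\,0$ then $0\,=\,(\chi,\beta)\,=\,(\beta,\beta)+\sum_{\alpha\in R_Z\setminus\{\beta\}}(\alpha,\beta)$, so $(\alpha,\beta)\,<\,0$ for some $\alpha\in R_Z$ with $\alpha\,\neq\,\beta$; since also $\alpha\,\neq\,-\beta$ (as $-\beta\notin R_Z$), the standard fact about root strings forces $\alpha+\beta\in R$, contradicting the abelianness of $R_Z$. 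Thus $T$ carries a nonzero $k$-rational character, contradicting the hypothesis that $T$ is $k$-anisotropic; this proves the lemma. As an alternative route, one could instead invoke the Borel--Tits theorem to place $N_G(U)$, and hence $T$, inside a proper parabolic $k$-subgroup of $G$, which we may take to be some $P_\Theta$ with $\Theta\,\subsetneq\,\Phi$; then $T$ maps $k$-isomorphically onto a maximal torus of the Levi quotient $Z_G(S_\Theta)$, and that maximal torus contains the nontrivial $k$-split torus $S_\Theta$ by \eqref{A}, again contradicting the $k$-anisotropy of $T$.
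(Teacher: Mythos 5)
Your main argument is correct, but it takes a genuinely different route from the paper's. The paper argues at the level of groups: by Borel--Tits, $N_G(U)$ lies in a proper parabolic $k$--subgroup $P$ with $U\subset R_u(P)$; after conjugating $P$ to a standard $P_\Theta$ and conjugating $T$ into the Levi subgroup $Z_G(S_\Theta)$, one obtains a $k$--anisotropic maximal torus $T'$ commuting with $S_\Theta$, whence $S_\Theta\subset Z_G(T')=T'$, which is absurd --- this is essentially the ``alternative route'' you sketch in your final sentence. Your primary argument instead works at the level of Lie algebras over $\overline{k}$ together with Galois descent: you pass to the center $\mathfrak{z}$ of $\mathrm{Lie}(U)$, observe that the set $R_Z$ of roots occurring in $\mathfrak{z}\otimes\overline{k}$ is nonempty, Galois-stable, free of opposite pairs, and contains no two roots whose sum is a root, and conclude that $\sum_{\alpha\in R_Z}\alpha$ is a nonzero Galois-invariant, hence $k$--rational, character of $T$, contradicting $k$--anisotropy. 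Each step checks out; note that the connectedness of $U$, the identification of $\mathfrak{z}$ via the exponential correspondence, and the equality $[\mathfrak{g}_\alpha,\mathfrak{g}_\beta]=\mathfrak{g}_{\alpha+\beta}$ all lean on the standing characteristic-zero hypothesis, so your proof is not more general in that respect. What your route buys is independence from the Borel--Tits theorem and from conjugacy statements about parabolics and Levi subgroups, replacing them by an elementary root-string computation; what the paper's route buys is brevity and uniformity with the rest of the article, which is already steeped in parabolic and Levi structure theory. Your parenthetical remark about convex hulls for general nilpotent $T$--stable subalgebras is not needed for the argument and could be dropped.
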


\begin{proof}
If $G$ is $k$--anisotropic, then there no nontrivial unipotent $k$--subgroup of
$G$. Hence we will assume that $G$ is $k$--isotropic.

As before, let $S$ be a maximal $k$--split torus in $G$. To prove the lemma 
by contradiction, let $U\, \not=\, \{e\}$ be a unipotent $k$--subgroup 
so that $T\, \subset\, N_G(U)$. Using \cite[Proposition 3.1]{BT} we see that there is 
a parabolic $k$--subgroup $P\, \subset\, G$ such that
$$
N_G(U)\, \subset\, ~ P \,~ \text{ and } ~\, ~U \subset R_u(P)\, .
$$

Now, there is a subset $\Theta\,\subset\, \Phi$ such that $P$ is conjugate to 
$P_\Theta$ by some element in $G(k)$. Fix $\alpha\, \in\, G(k)$ such that 
$\alpha P \alpha^{-1}\, =\, P_\Theta$. Note that $P_\Theta\, \subsetneq\, G$
because $U\, \subset\, R_u(P)$ and $U\,\not=\,\{e\}$. Clearly, we have
$$
\alpha T \alpha^{-1}\, \subset\, P_\Theta\, .
$$
As $Z_G(S_\Theta)$ is a maximal reductive subgroup of $P_\Theta$ defined over $k$,
it follows that there is an element $\beta\, \in\, G(k)$ such that
$$
\beta T \beta^{-1}\, \subset\, Z_G(S_\Theta)\, .
$$
Define $T'\,:=\, \beta T\beta^{-1}$. This $T'$ is a maximal torus, and it is
$k$--anisotropic; also, $T'$ commutes with $S_\Theta$. Therefore,
$$
S_\Theta\, \subset\, Z_G(T')\,=\, T'\, .
$$
But this is in contradiction with the facts that $S_\Theta$ is positive 
dimensional and $k$--split while $T'$ is $k$--anisotropic. In view of this 
contradiction, the proof of the lemma is complete.
\end{proof}

In the next lemma we will deal with a semisimple group $H$ over the algebraic 
closure $\overline{k}$ of $k$. As in the case of $k$, we have a 
description of all the parabolic subgroups containing
a fixed Borel subgroup (see \cite[ p. 197, Section 14.17]{Bo}).

\begin{lemma}\label{lem2}
Let $H$ be a semisimple algebraic group defined over $\overline{k}$. Let
$P\, \subset\, H$ be a parabolic subgroup, and let $D \,\subset\, H$ be a 
connected solvable subgroup of $H$.
Let $T$ be a maximal torus of $H$ so that $T \,\subset\, D \cap P$.
Further assume that $R_u (P)\,\subset\, R_u (D)$. Then, $D\,\subset\, P$. 
\end{lemma}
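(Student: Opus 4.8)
The plan is to pass to Lie algebras and argue with the root space decomposition relative to $T$; since we are in characteristic zero, the passage between a connected subgroup and its Lie algebra loses no information, which makes this the most economical route. Write $\mathfrak{h}\,=\,\text{Lie}(H)$, $\mathfrak{t}\,=\,\text{Lie}(T)$, let $\Phi$ be the root system of $H$ with respect to $T$, and let $\mathfrak{h}\,=\,\mathfrak{t}\oplus\bigoplus_{\alpha\in\Phi}\mathfrak{g}_\alpha$ be the corresponding decomposition into (one--dimensional) root spaces. Since $P$ is a parabolic subgroup of $H$ containing the maximal torus $T$, standard structure theory gives a parabolic subset $\Psi_P\,\subset\,\Phi$ (that is, $\Psi_P$ is closed under addition of roots and $\Psi_P\cup(-\Psi_P)\,=\,\Phi$) with $\text{Lie}(P)\,=\,\mathfrak{t}\oplus\bigoplus_{\alpha\in\Psi_P}\mathfrak{g}_\alpha$ and $\text{Lie}(R_u(P))\,=\,\bigoplus_{\alpha\in\Psi_P^{\,u}}\mathfrak{g}_\alpha$, where $\Psi_P^{\,u}\,:=\,\Psi_P\setminus(-\Psi_P)$.

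Next I would describe $\mathfrak{d}\,:=\,\text{Lie}(D)$. Because $T\,\subset\,D$, conjugation by $T$ preserves $D$, so $\mathfrak{d}$ is a $T$--submodule of $\mathfrak{h}$ and therefore decomposes into weight spaces; as $\mathfrak{t}\,\subset\,\mathfrak{d}$ accounts for the zero weight, we get $\mathfrak{d}\,=\,\mathfrak{t}\oplus\bigoplus_{\alpha\in\Psi_D}\mathfrak{g}_\alpha$ where $\Psi_D\,:=\,\{\alpha\in\Phi\mid\mathfrak{g}_\alpha\subset\mathfrak{d}\}$. The hypothesis $R_u(P)\,\subset\,R_u(D)\,\subset\,D$ yields $\text{Lie}(R_u(P))\,\subset\,\mathfrak{d}$, and comparing weight spaces this says exactly that $\Psi_P^{\,u}\,\subset\,\Psi_D$. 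Since $T\,\subset\,P$ and $D$ is connected, it now suffices to prove $\mathfrak{d}\,\subset\,\text{Lie}(P)$, i.e. $\Psi_D\,\subset\,\Psi_P$.

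The key step is the observation that $\Psi_D$ contains no pair $\{\alpha\, ,-\alpha\}$: if it did, then $\mathfrak{d}$ would contain $\mathfrak{g}_\alpha$, $\mathfrak{g}_{-\alpha}$ and hence the nonzero element $[\mathfrak{g}_\alpha\, ,\mathfrak{g}_{-\alpha}]\,\subset\,\mathfrak{t}$, so $\mathfrak{d}$ would contain a copy of $\mathfrak{sl}_2$, contradicting the solvability of $D$. Granting this, take any $\alpha\in\Psi_D$ and suppose $\alpha\notin\Psi_P$. Since $\Psi_P\cup(-\Psi_P)\,=\,\Phi$, we have $-\alpha\in\Psi_P$; and $\alpha\notin\Psi_P$ forces $-\alpha\notin-\Psi_P$, hence $-\alpha\in\Psi_P^{\,u}\,\subset\,\Psi_D$. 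Thus $\{\alpha\, ,-\alpha\}\,\subset\,\Psi_D$, a contradiction. Therefore $\Psi_D\,\subset\,\Psi_P$, so $\mathfrak{d}\,\subset\,\text{Lie}(P)$ and hence $D\,\subset\,P$.

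I do not expect a serious obstacle here; the argument is short once the setup is in place. The only points requiring care are invoking the correct dictionary between parabolic subgroups containing $T$ and parabolic subsets of $\Phi$ (with the correct description of $R_u(P)$ in terms of the strictly one--sided roots $\Psi_P^{\,u}$), and using characteristic zero to read the inclusion $R_u(P)\subset R_u(D)$ and the desired conclusion $D\subset P$ off at the level of Lie algebras. If one prefers to stay at the group level, the same argument runs using the fact that a connected unipotent subgroup normalized by $T$ is directly spanned by the root subgroups $U_\alpha$ it contains.
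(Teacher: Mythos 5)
Your proof is correct and is essentially the paper's argument: both reduce to Lie algebras, decompose $\text{Lie}(D)$ into $T$--weight spaces, and derive a contradiction with solvability by producing a copy of $\mathfrak{sl}_2$ from a pair $\{\alpha,-\alpha\}$ of roots, where $-\alpha$ is forced into $\text{Lie}(D)$ via $R_u(P)\subset R_u(D)$. The only (cosmetic) difference is that you phrase $P$ via an abstract parabolic subset of roots, while the paper fixes a Borel subgroup and writes $P=P_\Theta$ in terms of simple roots.
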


\begin{proof}
Since both $D$ and $P$ are connected, it is enough to show that
\begin{equation}\label{zh1}
\text{Lie}(D) \,\subset\, \text{Lie}(P)\, .
\end{equation}

To prove \eqref{zh1} by contradiction, suppose 
$\text{Lie}(D)$ is not contained in $\text{Lie}(P)$. We fix a Borel subgroup $B 
\,\subset\,P$ containing $T$. Let $\widetilde{\Delta}$ be the set of roots with 
respect to $T$. Let $\widetilde{\Delta}^+$ be the set of positive roots induced 
by $B$, and let $\widetilde{\Phi}$ be the set of simple roots in 
$\widetilde{\Delta}^+$. Then there is a subset $\Theta\,\subset\, 
\widetilde{\Phi}$ such that $P\,=\,P_\Theta$.

Denote the ${\mathbb Z}$--span of $\Theta$ by ${\mathbb Z}\cdot\Theta$. As 
$\text{Lie}(D)$ is $T$--invariant under the adjoint action, and $\text{Lie}(D)$ 
is not contained in $\text{Lie}(P)$, we conclude that there is an element 
$\alpha\,\in\,\widetilde{\Delta}^+ - {\mathbb Z}\cdot\Theta$ such that 
$\text{Lie}(H)_{-\alpha}\,\subset\,\text{Lie}(D)$. As $R_u (P)\,\subset \,R_u 
(D)$, it follows that $\text{Lie}(H)_\alpha\,\subset\, \text{Lie}(D)$. Thus
$$
\text{Lie}(H)_{-\alpha} + \text{Lie}(T) + \text{Lie}(H)_\alpha\,\subset 
\,\text{Lie}(D)\, .
$$
But $\text{Lie}(D)$ is a solvable Lie algebra, while $\text{Lie}(H )_{-\alpha} 
+ \text{Lie}(T) + \text{Lie}(H)_\alpha$ contains a copy of ${\mathfrak 
s}{\mathfrak l}_2 (\overline{k})$. This is a contradiction, proving
\eqref{zh1}.
\end{proof}

Let $$\Theta\, \subset\,\Phi$$ be a subset such that $Z_G(S_\Theta)/S_\Theta$ 
admits a $k$--anisotropic maximal torus. Recall that 
this is equivalent to the assertion that $[Z_G(S_\Theta)\, ,Z_G 
(S_\Theta)]$ admits a $k$--anisotropic maximal torus. Let
$$
T \, \subset\, [Z_G(S_\Theta)\, , Z_G(S_\Theta)]
$$
be a $k$--anisotropic maximal torus of $[Z_G(S_\Theta)\, , Z_G(S_\Theta)]$.
Note that $$T \, Z(Z_G(S_\Theta)) \,=\, T \, Z(Z_G(S_\Theta))^0\, .$$
Clearly, $T \,Z(Z_G(S_\Theta))$ is a maximal $k$--torus of $Z_G (S_\Theta)$.

\begin{theorem}\label{thm1}
In the above set--up, $$B_{\Theta,T}\, :=\, TZ(Z_G(S_\Theta)) R_u(P_\Theta)$$ 
is a maximal connected solvable $k$--subgroup of $G$.
\end{theorem}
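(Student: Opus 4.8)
The plan is to show first that $B_{\Theta,T}$ is a connected solvable $k$-subgroup, and then that it is maximal among such subgroups. The first part is essentially already observed: $Z(Z_G(S_\Theta))^0 = A\cdot S_\Theta$ is a torus by \eqref{A}, $T$ is a $k$-anisotropic torus, these two tori commute (one sits in $Z_G(S_\Theta)$ and the other is central there), so $TZ(Z_G(S_\Theta))$ is a $k$-torus of $Z_G(S_\Theta)$; in fact it is a maximal $k$-torus as remarked just above the theorem. Since $TZ(Z_G(S_\Theta))$ normalizes $R_u(P_\Theta)$ (it lies in $P_\Theta$) and $R_u(P_\Theta)$ is a normal unipotent $k$-subgroup, the product $B_{\Theta,T}=TZ(Z_G(S_\Theta))R_u(P_\Theta)$ is a connected solvable $k$-subgroup of $G$.

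For maximality, I would argue by contradiction: suppose $B_{\Theta,T}\subsetneq D$ with $D$ a connected solvable $k$-subgroup of $G$. The first key observation is that $R_u(B_{\Theta,T}) = R_u(P_\Theta)$, so $R_u(P_\Theta)\subset R_u(D)$; indeed $B_{\Theta,T}/R_u(P_\Theta)\cong TZ(Z_G(S_\Theta))$ is a torus, so $R_u(P_\Theta)$ is exactly the unipotent radical of $B_{\Theta,T}$, and the unipotent radical of a subgroup can only grow when the subgroup grows, giving $R_u(P_\Theta)\subset R_u(D)$. The second key observation is that $TZ(Z_G(S_\Theta))$ is a maximal torus of $G$, since $Z_G(S_\Theta)$ is a maximal reductive (in particular maximal-rank) subgroup of $P_\Theta$ and maximal tori of $Z_G(S_\Theta)$ are maximal tori of $P_\Theta$, hence of $G$. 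A maximal torus $\widetilde T$ of $D$ containing $TZ(Z_G(S_\Theta))$ therefore equals $TZ(Z_G(S_\Theta))$, so $D$ and $P_\Theta$ share the maximal torus $TZ(Z_G(S_\Theta))$ and we have $R_u(P_\Theta)\subset R_u(D)$. Now I would like to invoke Lemma \ref{lem2}: working over $\overline k$, with $H=G_{\overline k}$, $P=(P_\Theta)_{\overline k}$, the conclusion is $D_{\overline k}\subset (P_\Theta)_{\overline k}$, hence $D\subset P_\Theta$.

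Once $D\subset P_\Theta = Z_G(S_\Theta)\cdot R_u(P_\Theta)$, I finish inside $P_\Theta$. Modding out by $R_u(P_\Theta)$, the image $\overline D$ of $D$ in $Z_G(S_\Theta)$ is a connected solvable $k$-subgroup containing the image of $B_{\Theta,T}$, which is the maximal torus $TZ(Z_G(S_\Theta))$ of the reductive group $Z_G(S_\Theta)$. A connected solvable subgroup of a connected reductive group that contains a maximal torus $\widetilde T$ and is strictly larger must contain a nontrivial unipotent element, in fact a nontrivial unipotent $k$-subgroup $V$ normalized by $\widetilde T$ (take a $\widetilde T$-root subgroup appearing in $\mathrm{Lie}(\overline D)$; rationality can be arranged since $\overline D$ is defined over $k$ and $\widetilde T$ is a $k$-torus). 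Passing to $Z_G(S_\Theta)/S_\Theta$, whose maximal torus is $k$-anisotropic by admissibility, the image of $\widetilde T$ is a $k$-anisotropic maximal torus normalizing a nontrivial unipotent $k$-subgroup — contradicting Lemma \ref{lem1}. Hence $\overline D = TZ(Z_G(S_\Theta))$, so $D\subset TZ(Z_G(S_\Theta))R_u(P_\Theta)=B_{\Theta,T}$, forcing $D=B_{\Theta,T}$.

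The main obstacle I anticipate is the rationality bookkeeping in the last paragraph: passing from "$\overline D$ strictly contains the maximal torus, hence contains a unipotent root subgroup" to "there is a nontrivial \emph{$k$-defined} unipotent subgroup normalized by a $k$-torus," and then checking that its image in $Z_G(S_\Theta)/S_\Theta$ is still nontrivial and $k$-defined. This should follow because $S_\Theta$ is a central torus in $Z_G(S_\Theta)$, so the unipotent part maps isomorphically, but one must be careful that everything in sight — $\overline D$, the root subgroup, the torus — descends to $k$. A cleaner alternative for that step is to observe directly that $D$, being connected solvable with maximal torus $\widetilde T = TZ(Z_G(S_\Theta))$ and $R_u(D)\supseteq R_u(P_\Theta)$, has $R_u(D)/R_u(P_\Theta)$ a unipotent $k$-subgroup of $Z_G(S_\Theta)$ normalized by $\widetilde T$; its image in $Z_G(S_\Theta)/S_\Theta$ is a unipotent $k$-subgroup normalized by the $k$-anisotropic maximal torus there, so by Lemma \ref{lem1} it is trivial, whence $R_u(D)=R_u(P_\Theta)$ and therefore $D=\widetilde T\,R_u(P_\Theta)=B_{\Theta,T}$.
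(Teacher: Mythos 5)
Your argument is correct and follows essentially the same route as the paper: both show $R_u(P_\Theta)\subset R_u(D)$, use Lemma \ref{lem2} with the maximal torus $TZ(Z_G(S_\Theta))$ to conclude $D\subset P_\Theta$, and then invoke Lemma \ref{lem1} via the anisotropic torus to kill any excess unipotent part, so that $R_u(D)=R_u(P_\Theta)$ and $D=B_{\Theta,T}$. The only real difference is the bookkeeping in the last step: the paper intersects $R_u(D)$ with the semisimple group $[Z_G(S_\Theta),Z_G(S_\Theta)]$, to which Lemma \ref{lem1} applies as stated, whereas your ``cleaner alternative'' passes to $Z_G(S_\Theta)/S_\Theta$, which is in general only reductive (its connected center contains the image of the anisotropic torus $A$ of \eqref{A}), while Lemma \ref{lem1} is stated for semisimple groups. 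This is a one-line fix --- any unipotent subgroup of a reductive group lies in its derived subgroup, so you can reduce to the semisimple case --- but as written you are applying the lemma slightly outside its stated hypotheses; your first sketch of that step (producing a $k$-defined root subgroup inside $\overline D$) is indeed the more delicate path, and the intersection-with-the-derived-group device is exactly how the paper sidesteps it.
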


\begin{proof}
Clearly $B_{\Theta,T}$ is a connected solvable $k$--subgroup.
Let $B\, \subset\, G$ be a connected solvable $k$--subgroup
such that $B_{\Theta,T}\,\subset\, B$. 
We set $T_\Theta \,:=\,T'_\Theta \,Z(Z_G(S_\Theta))$.
Since $T_\Theta$ is a maximal 
torus of $G$, and $T_\Theta\, \subset\, B_{\Theta,T} \,\subset\, B$, 
we conclude that $B\,=\, T_\Theta R_u(B)$. Further, as $B_{\Theta,T}\, 
\subset\, B$, it follows that
$$R_u(B_{\Theta,T})\,=\, R_u(P_\Theta)\,\subset\, R_u(B)\, .
$$

Therefore, to prove the theorem, it suffices to show that
\begin{equation}\label{f1}
R_u(B_{\Theta,T})\,=\, R_u(B)\, .
\end{equation}

As $T_\Theta$ is a maximal torus in $G$ contained in $B$,
and $R_u(P_\Theta)\,\subset\, R_u(B)$, from Lemma \ref{lem2} it follows
that $$B\, \subset\, P_\Theta\, .$$ Since
$R_u(P_\Theta)\, \subset\, R_u(B)\, \subset\, P_\Theta$, and
$Z_G(S_\Theta)\,=\, [Z_G(S_\Theta)\, ,Z_G(S_\Theta)]Z(Z_G(S_\Theta))$,
one has that
$$
R_u(B)\,=\, (Z_G(S_\Theta)\cap R_u(B))R_u(P_\Theta)\,=\,
([Z_G(S_\Theta)\, ,Z_G(S_\Theta)]\cap R_u(B))R_u(P_\Theta)\, .
$$
Clearly, $T_\Theta\, \subset\, N_G(R_u(B))$. Hence
$$
T \, \subset\, N_{[Z_G(S_\Theta) ,Z_G(S_\Theta)]}(
[Z_G(S_\Theta)\, ,Z_G(S_\Theta)]\cap R_u(B))\, .
$$
But recall that $T$ is a maximal torus in $[Z_G(S_\Theta)\, 
,Z_G(S_\Theta)]$, and $T$ is $k$--anisotropic. Therefore, applying 
Lemma \ref{lem1} to the semisimple group $[Z_G(S_\Theta)\, ,Z_G(S_\Theta)]$ 
and its unipotent subgroup $[Z_G(S_\Theta)\, ,Z_G(S_\Theta)]\cap R_u(B)$, we 
conclude that
$$
[Z_G(S_\Theta)\, ,Z_G(S_\Theta)]\cap R_u(B)\,=\, \{e\}\, .
$$
Hence $R_u(B)\,=\, R_u(P_\Theta)\,=\, R_u(B_{\Theta,T})$, proving 
\eqref{f1}.
\end{proof}

\section{Completeness of the collection up to conjugation}

\begin{lemma}\label{lem3}
Let $G$ be a semisimple algebraic group defined over $k$. Let
$A\, \subset\, G$ be a $k$--split torus. Let $T\, \subset\, G$ be 
another $k$-torus containing $A$. Assume that there is 
no unipotent $k$-subgroup $U$ of $G$ such 
that $T\, \subset\, N_G(U)$. Then $A\,=\, \{e\}$.
\end{lemma}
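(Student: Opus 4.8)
\textbf{Proof proposal for Lemma \ref{lem3}.}

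The plan is to argue by contradiction: assume $A\,\neq\,\{e\}$ and produce a nontrivial unipotent $k$--subgroup $U$ normalized by $T$, contradicting the hypothesis. The starting observation is that a nontrivial $k$--split torus $A$ makes $G$ into a $k$--isotropic group, so $G$ admits a proper parabolic $k$--subgroup; more precisely, since $A$ centralizes itself, $A$ is contained in some maximal $k$--split torus $S$ of $G$, and because $A$ is nontrivial there is a $k$--parabolic subgroup $Q\,\subsetneq\,G$ with Levi factor $Z_G(A)$ and unipotent radical $R_u(Q)\,\neq\,\{e\}$; one can for instance take $Q$ to be a $k$--parabolic of the form $P_\Theta$ (relative to $S$) with $S_\Theta$ contained in $A$, so that $A$ lies in the center of the Levi $Z_G(S_\Theta)$ of $Q$, or invoke the general structure theory that attaches to any nontrivial $k$--split torus a pair of opposite parabolic $k$--subgroups (see \cite[Section 20]{Bo} or \cite{BT2}).

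The next step is to promote this to something normalized by all of $T$, not just by $A$. Since $T$ is a torus containing $A$, $T$ is contained in $Z_G(A)$, the Levi factor of $Q$; hence $T$ normalizes $R_u(Q)$, because the Levi factor always normalizes the unipotent radical of the parabolic. Thus $U\,:=\,R_u(Q)$ is a nontrivial unipotent $k$--subgroup of $G$ with $T\,\subset\,Z_G(A)\,\subset\,N_G(U)$, directly contradicting the hypothesis of the lemma. Therefore $A\,=\,\{e\}$.

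The only real point requiring care — the ``main obstacle'' such as it is — is the first step: exhibiting, from a nontrivial $k$--split torus $A$, a proper parabolic $k$--subgroup whose Levi contains $Z_G(A)$ (equivalently, contains $A$ in its center) and whose unipotent radical is nontrivial and defined over $k$. This is where one uses that $k$ has characteristic zero only incidentally; the substantive input is the Borel--Tits theory of reductive groups over an arbitrary field, namely that a nontrivial $k$--split subtorus determines a nontrivial (opposite) pair of $k$--parabolic subgroups with common Levi the centralizer of that subtorus. Once that is in hand, everything else is the elementary remark that a Levi normalizes its unipotent radical, combined with $A\,\subset\,T\,\subset\,Z_G(A)$. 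I would therefore organize the write-up as: (i) reduce to $G$ being $k$--isotropic via $A\,\neq\,\{e\}$; (ii) quote the structure theorem to get $Q$ and $R_u(Q)\,\neq\,\{e\}$ over $k$; (iii) observe $T\,\subset\,Z_G(A)\,\subset\,N_G(R_u(Q))$; (iv) conclude the contradiction.
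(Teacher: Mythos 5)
Your proof is correct, but it reaches the contradiction by a different mechanism than the paper. The paper argues infinitesimally: it decomposes $\text{Lie}(G)$ into weight spaces for the adjoint action of the split torus $A$, picks a nontrivial weight $\chi_0$ (which must exist unless $A\subset Z(G)$, which is impossible for a positive-dimensional $A$ in a semisimple $G$), and exponentiates the nilpotent subalgebra $\bigoplus_{m>0}\text{Lie}(G)_{m\chi_0}$ to produce the offending unipotent $k$--subgroup $U$; the torus $T$ normalizes $U$ because $T\supset A$ is abelian, hence $\text{Ad}(T)$ preserves the $A$--weight spaces. You instead quote the Borel--Tits structure theory to get a proper parabolic $k$--subgroup $Q$ with Levi $Z_G(A)$ and take $U=R_u(Q)$. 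Both arguments hinge on the same two facts --- $T\subset Z_G(A)$ and $A\not\subset Z(G)$ --- and your version is shorter, at the cost of invoking heavier machinery; the paper's version is essentially the construction of (a piece of) that unipotent radical by hand, and stays self-contained apart from the exponential map in characteristic zero. Two small points you should tighten: (i) you need $Z_G(A)\neq G$ to know $Q$ is proper and $R_u(Q)\neq\{e\}$, and this is exactly where semisimplicity (finiteness of $Z(G)$) enters --- ``$G$ is $k$--isotropic'' alone is not the relevant statement; (ii) your parenthetical suggestion to realize $Q$ as a standard $P_\Theta$ with $S_\Theta\subset A$ does not work in general, since a generic one-dimensional subtorus $A$ of $S$ contains no nontrivial $S_\Theta$, so the argument must rest on the general statement (via a cocharacter $\lambda$ of $A$ with $Z_G(\lambda)=Z_G(A)$, or \cite[4.15--4.16]{BT2}) rather than on that ``for instance.''
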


\begin{proof}
It is enough to show that $A\, \subset\, Z(G)$, or, equivalently,
\begin{equation}\label{f2}
\text{Ad}(s)\,=\, \text{Id}_{{\rm Lie}(G)}
\end{equation}
for all $s\, \in\, A$. Let
$$
\Gamma\, \subset\, X(A)
$$
be the finite subset of characters such that
$$
\text{Lie}(G)\,=\, \bigoplus_{\chi \in \Gamma} \text{Lie}(G)_\chi\, 
~\, \text{ and }\, ~ \text{Lie}(G)_\chi\,\not=\, 0\, \,\, \forall\,
\chi\,\in\, \Gamma
$$
($\text{Lie}(G)_\chi\, \subset\, \text{Lie}(G)$ is the weight-space, under the 
adjoint action of $A$,
corresponding to the character $\chi$). Clearly, \eqref{f2} is equivalent to 
the statement that
\begin{equation}\label{f3}
\Gamma\,=\, \{1\}\, ,
\end{equation}
where $1\, \in\, X(A)$ is the trivial character of $A$.

To prove \eqref{f3} using contradiction, assume that $\Gamma\,\not=\, \{1\}$. 
Take any nontrivial character $\chi_0\, \in\, \Gamma$. Define
$$
M\,:=\, \bigoplus_{m>0} \text{Lie}(G)_{m\cdot \chi_0}\, .
$$
Let $\exp$ be the usual exponential map from the $k$--subvariety of nilpotent 
elements in $\text{Lie}(G)$ to the $k$--subvariety of unipotent elements in 
$G$. Define $U\, :=\, \exp(M)\, \subset\, G$, which is a unipotent
$k$--subgroup. We have 
$T\, \subset\, N_G(U)$, because $\text{Ad}(T)(M)\,=\, M$. But $\exp 
(\text{Lie}(G)_{\chi_0}) \, \subset\, U$. In particular, $U\, \not=\, \{e\}$, 
which is in contradiction with the assumption in the lemma. Therefore, we have 
proved that \eqref{f3} holds; hence \eqref{f2} holds.
\end{proof}

\begin{theorem}\label{thm2}
Let $G$ be a semisimple affine algebraic group defined over $k$, and let $B$ be 
a maximal connected solvable $k$--subgroup of $G$. Then there is an admissible 
subset $\Theta\, \subset\, \Phi$, a maximal $k$--torus $T \,\subset
\, [Z_G(S_\Theta)\, ,Z_G(S_\Theta)]$ which is $k$--anisotropic and 
an element $\alpha\, \in\, G(k)$, such that the following holds:
$$
\alpha B \alpha^{-1}\,=\, T Z(Z_G (S_\Theta))R_u 
(P_\Theta)\, :=\, B_{\Theta,T}\, .
$$
\end{theorem}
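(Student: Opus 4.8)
The plan is to take an arbitrary maximal connected solvable $k$--subgroup $B$ of $G$ and first locate it inside a standard parabolic. Since $B$ is connected and solvable, $B = T_B R_u(B)$ for some maximal $k$--torus $T_B$ of $B$; by the conjugacy of minimal parabolics and the structure theory, there is a smallest parabolic $k$--subgroup $Q$ of $G$ with $B \subset Q$ and $R_u(B) \subset R_u(Q)$ (one can take $Q$ generated by $B$ together with a Borel of the $\overline{k}$--closure argument, or invoke \cite[Proposition 3.1]{BT} applied to $U = R_u(B)$). After conjugating by an element of $G(k)$ we may assume $Q = P_\Theta$ for some $\Theta \subset \Phi$. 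Writing $P_\Theta = Z_G(S_\Theta) \cdot R_u(P_\Theta)$ and using that $Z_G(S_\Theta)$ is a maximal reductive $k$--subgroup of $P_\Theta$, a further $G(k)$--conjugation puts $T_B$ inside $Z_G(S_\Theta)$, and then (after adjusting $B$ accordingly) we have $T_B \subset Z_G(S_\Theta)$ and $R_u(B) \subset R_u(P_\Theta)$.

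Next I would analyse the reductive part. Decompose $Z_G(S_\Theta) = [Z_G(S_\Theta), Z_G(S_\Theta)] \cdot Z(Z_G(S_\Theta))$ and correspondingly write $T_B = T \cdot Z(Z_G(S_\Theta))^0$ where $T$ is a maximal $k$--torus of the semisimple group $L := [Z_G(S_\Theta), Z_G(S_\Theta)]$. The crucial claim is that $T$ is $k$--anisotropic; equivalently $\Theta$ is admissible. This is where Lemma \ref{lem3} enters: the split part $A$ of $T$ is a $k$--split torus normalising the unipotent group $R_u(B) \cap L$, and more pointedly one wants to show that maximality of $B$ forces $T$ to have no $k$--split subtorus acting nontrivially. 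Concretely, if $T$ had a positive--dimensional split part $A$, then one could enlarge $R_u(B)$ inside $R_u(P_\Theta) \cdot L$ by adjoining the unipotent group $\exp\big(\bigoplus_{m>0}\mathrm{Lie}(G)_{m\chi_0}\big)$ attached to a nontrivial weight $\chi_0$ of $A$ on $\mathrm{Lie}(L)$ (exactly the construction in the proof of Lemma \ref{lem3}), producing a strictly larger connected solvable $k$--subgroup and contradicting maximality of $B$. So $T$ is anisotropic and $\Theta$ is admissible.

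Having fixed $\Theta$ admissible and $T$ a maximal anisotropic $k$--torus of $L$, it remains to show $B = B_{\Theta,T} = T Z(Z_G(S_\Theta)) R_u(P_\Theta)$. We already have $B_{\Theta,T} \subset B$: indeed $T Z(Z_G(S_\Theta)) = T_B \subset B$, and the reverse containment $R_u(B) \supset R_u(P_\Theta)$ should follow from maximality (if $R_u(B)$ missed part of $R_u(P_\Theta)$, again one could enlarge $B$ within $P_\Theta$). Then Theorem \ref{thm1} says $B_{\Theta,T}$ is itself a maximal connected solvable $k$--subgroup, so the inclusion $B_{\Theta,T} \subset B$ is an equality. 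Conjugating back by the element of $G(k)$ used at the start yields the element $\alpha$ of the statement.

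The main obstacle I anticipate is the step showing $T$ must be anisotropic — i.e.\ verifying that a nontrivial split part of $T$ genuinely yields a strictly larger connected solvable $k$--subgroup normalised by $T_B$. One must check that $U := \exp\big(\bigoplus_{m>0}\mathrm{Lie}(G)_{m\chi_0}\big)$, together with $B$, generates a solvable $k$--subgroup (the point being that $R_u(B) \cdot U$ is still a unipotent group and $T_B$ normalises it), and that this group strictly contains $B$; handling the interaction of $U$ with $R_u(B)$ and ensuring everything stays defined over $k$ is the delicate bookkeeping. The other potentially fiddly point is the very first reduction, namely producing the parabolic $Q$ with $B \subset Q$ and $R_u(B) \subset R_u(Q)$; here invoking \cite[Proposition 3.1]{BT} for the unipotent group $R_u(B)$, exactly as in the proof of Lemma \ref{lem1}, should do the job cleanly.
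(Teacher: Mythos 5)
Your proposal follows essentially the same route as the paper: reduce to a standard parabolic via Borel--Tits applied to $R_u(B)$ (the paper uses the normalizer $N_G(R_u(B))$ and \cite[Corollaire 3.2]{BT} to get $R_u(B)=R_u(P_\Theta)$ directly), conjugate the maximal torus of $B$ into the Levi $Z_G(S_\Theta)$, and then derive anisotropy of $T$ from maximality via exactly the weight-space construction of Lemma \ref{lem3}. The one point to tidy is ordering: establish $R_u(B)=R_u(P_\Theta)$ \emph{before} the anisotropy argument, since otherwise $R_u(B)\cdot U$ need not be a group (whereas $U\cdot R_u(P_\Theta)$ is, because $R_u(P_\Theta)$ is normal in $P_\Theta$) --- this is precisely how the paper arranges the steps.
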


\begin{proof}
We have $B\, \subset\, N_G(R_u(B))$. Let $T'$ be a maximal $k$--torus of $B$
such that $B\,=\, T' R_u(B)$. Therefore, we have
$$
T' \, \subset\, N_G(R_u(B))\, .
$$
Further, we have
$$
R_u(B)\, \subset\, R_u(N_G(R_u(B))) ~\,~ \text{ and }~\, ~
T' R_u(N_G(R_u(B)))\, \supset\, T' R_u(B)\,=\, B\, .
$$

Since $T' R_u(N_G(R_u(B)))$ is a connected solvable $k$--subgroup,
by maximality of $B$, we have
\begin{equation}\label{e1}
R_u(N_G(R_u(B)))\,=\, R_u(B)\, .
\end{equation}
So, using \cite[Corollaire 3.2]{BT} it follows that $N_G(R_u(B))$ is a 
parabolic $k$--subgroup of $G$. This parabolic
$k$--subgroup $N_G(R_u(B))$ will be denoted by $Q$. From 
\eqref{e1} it follows that $R_u(Q) \,=\, R_u(B)$.

There is an element $\delta\, \in\, G(k)$, and a subset $\Theta\, \subset\,
\Phi$, such that
\begin{equation}\label{g1}
\delta Q\delta^{-1} \,=\, P_\Theta\, .
\end{equation}

It is enough to prove the theorem for the group $\delta^{-1} B\delta$ 
instead of $B$. In the rest of the proof we replace $B$ by $\delta^{-1} 
B\delta$.

With this substitution, \eqref{g1} becomes
$$
N_G(R_u(B))\,=\, Q\,=\, P_\Theta\, .
$$
Consequently,
\begin{equation}\label{f10}
R_u(B)\,=\, R_u(P_\Theta)~ \,~ \text{ and } ~\, ~B \subset P_\Theta\, .
\end{equation}
As $Z_G(S_\Theta)$ is a Levi $k$--subgroup of $P_\Theta$, there 
is a $k$--rational point $\gamma\,\in\, R_u (P_\Theta) (k)$ such that
\begin{equation}\label{f5}
\gamma T' \gamma^{-1} \, \subset\, Z_G(S_\Theta)\, .
\end{equation}

We now substitute the maximal $k$--torus
$$
\widehat{T}\, :=\, \gamma T' \gamma^{-1}\, \subset\, B
$$
in place of $T'$.

Therefore, from \eqref{f5} we have
\begin{equation}\label{f9}
\widehat{T}\, \subset\, Z_G(S_\Theta)\, .
\end{equation}

{}From the maximality of $B$ it follows that $\widehat{T}$ is a maximal torus 
of $Z_G(S_\Theta)$. Consequently,
$$
Z(Z_G(S_\Theta)) \, \subset\, \widehat{T}\, .
$$

Let $$T \, \subset\, [Z_G(S_\Theta)\, , Z_G(S_\Theta)]$$ be a maximal 
$k$--torus, and let $A\, \subset\, Z(Z_G(S_\Theta))^0$ be a 
$k$--anisotropic torus (see \eqref{A}), such that
$$Z(Z_G(S_\Theta))^0 \,=\, AS_\Theta
~\, ~\text{ and } ~\,~ \widehat{T}\,=\, T A S_\Theta\, .$$

We will prove that the torus $T$ is $k$--anisotropic.

Let $T_1\, \subset\, T$ be the $k$--split part of $T$. From the maximality 
of $B$ it follows there is no nontrivial unipotent $k$--subgroup of $G$
such that
$$U\, \subset\, [Z_G(S_\Theta)\, , Z_G(S_\Theta)] ~\, ~\text{ and }\, ~
T\, \subset\, N_{[Z_G(S_\Theta)\, , Z_G(S_\Theta)]} (U)\, .
$$
Indeed, otherwise $B$ is strictly contained in $\widehat{T} UR_u(P_\Theta)$, 
contradicting the maximality of $B$. Now from Lemma \ref{lem3}, and the fact 
that $[Z_G(S_\Theta)\, , Z_G(S_\Theta)]$ is semisimple, we conclude that 
$T_1\,=\, \{e\}$. Thus $T$ is $k$--anisotropic.

Since $T$ is $k$--anisotropic, in view of \eqref{f10} and \eqref{f9}, we
conclude that $B\,=\, B_{\Theta, T}$.
\end{proof}

Theorem \ref{thm1} and Theorem \ref{thm2} together describe
the maximal connected solvable subgroups of $G$ defined over $k$.
It remains to give a criterion for two subgroups as in Theorem \ref{thm1} 
to be conjugate by some element of $G(k)$.

Let
$$
\Theta_1\, , \Theta_2 \,\subset\, \Phi
$$
be two admissible subsets. As before, we construct 
subgroups
$$
B_{\Theta_i, T_i} \,:=\, T_i Z(Z_G(S_{\Theta_i})) R_u (P_{\Theta_i})\, ,~\,
~ i\,=\, 1\, ,2\, ,
$$
where $T_i$ is a maximal $k$-torus of
$[Z_G(S_{\Theta_i})\, , Z_G(S_{\Theta_i})]$ which is $k$--anisotropic.

\begin{proposition}\label{thm3}
The two subgroups $B_{\Theta_1, T_1}$ and 
$B_{\Theta_2,T_2}$ are conjugate by 
some element of $G(k)$ if and only if $$\Theta_1\,= \,\Theta_2~\,~ \text{ and 
}~\,~ c \,T_1\, c^{-1} \,= \,T_2$$ for some $c\,\in\, P_{\Theta_1}(k) 
\,=\,P_{\Theta_2} (k)$ satisfying the condition that $c \, Z_G ( 
S_{\Theta_1}) \, c^{-1} \,=\, Z_G (S_{\Theta_1})$.
\end{proposition}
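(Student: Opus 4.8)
The plan is to prove the two implications separately, with essentially all the work in the forward direction. The reverse implication is routine: suppose $\Theta_1=\Theta_2=:\Theta$ and $c\in P_\Theta(k)$ satisfies $cZ_G(S_\Theta)c^{-1}=Z_G(S_\Theta)$ and $cT_1c^{-1}=T_2$. Since $c\in P_\Theta=N_G(P_\Theta)$, it normalizes the characteristic subgroup $R_u(P_\Theta)$, and since it normalizes $Z_G(S_\Theta)$ it normalizes the characteristic subgroup $Z(Z_G(S_\Theta))$; hence conjugation by $c$ carries the set $T_1\,Z(Z_G(S_\Theta))\,R_u(P_\Theta)$ onto $T_2\,Z(Z_G(S_\Theta))\,R_u(P_\Theta)$, i.e.\ $cB_{\Theta,T_1}c^{-1}=B_{\Theta,T_2}$.

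For the forward implication, assume $gB_{\Theta_1,T_1}g^{-1}=B_{\Theta_2,T_2}$ with $g\in G(k)$. First I would pin down the subsets. By construction $R_u(B_{\Theta_i,T_i})=R_u(P_{\Theta_i})$, so $g$ conjugates $R_u(P_{\Theta_1})$ onto $R_u(P_{\Theta_2})$ and hence conjugates $N_G(R_u(P_{\Theta_1}))=P_{\Theta_1}$ onto $N_G(R_u(P_{\Theta_2}))=P_{\Theta_2}$; the identity $N_G(R_u(P_\Theta))=P_\Theta$ is standard (and is also implicit in the proof of Theorem~\ref{thm2} applied to the maximal connected solvable $k$--subgroup $B_{\Theta_i,T_i}$, which is maximal by Theorem~\ref{thm1}). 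Since distinct standard parabolic $k$--subgroups lie in distinct $G(k)$--conjugacy classes, this forces $\Theta_1=\Theta_2=:\Theta$, and then $g\in N_G(P_\Theta)(k)=P_\Theta(k)$.

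Next I would correct $g$ inside $P_\Theta(k)$ and then extract the remaining two conditions. Write $\widehat{T_i}:=T_iZ(Z_G(S_\Theta))$; this is a maximal $k$--torus of $Z_G(S_\Theta)$, hence of $G$, and a maximal torus of the connected solvable $k$--group $B_{\Theta,T_i}$. Both $g\widehat{T_1}g^{-1}$ and $\widehat{T_2}$ are maximal $k$--tori of $B_{\Theta,T_2}$, so by the $R_u(B)(k)$--conjugacy of maximal $k$--tori in a connected solvable $k$--group there is $v\in R_u(B_{\Theta,T_2})(k)=R_u(P_\Theta)(k)$ with $v(g\widehat{T_1}g^{-1})v^{-1}=\widehat{T_2}$. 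Put $c:=vg\in P_\Theta(k)$; then $c\widehat{T_1}c^{-1}=\widehat{T_2}$, and also $cB_{\Theta,T_1}c^{-1}=B_{\Theta,T_2}$ since $v\in R_u(P_\Theta)\subset B_{\Theta,T_2}$. Using \eqref{A}, $\widehat{T_i}=T_iAS_\Theta$ with $A$ and $T_i$ anisotropic tori commuting with each other; since a product of commuting anisotropic tori is anisotropic (being a quotient of $T_i\times A$), the maximal $k$--split subtorus of $\widehat{T_i}$ is exactly $S_\Theta$. Conjugation by the $k$--point $c$ preserves maximal $k$--split subtori, so $cS_\Theta c^{-1}=S_\Theta$, whence $cZ_G(S_\Theta)c^{-1}=Z_G(S_\Theta)$ and $c$ normalizes $L:=[Z_G(S_\Theta),Z_G(S_\Theta)]$. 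Finally $c(\widehat{T_1}\cap L)c^{-1}=\widehat{T_2}\cap L$; passing to identity components, and using that $(\widehat{T_i}\cap L)^0$ is a torus of $L$ containing the maximal torus $T_i$ and hence equals $T_i$, we obtain $cT_1c^{-1}=T_2$, which completes the forward implication.

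The main thing to be careful about throughout is keeping the conjugating elements rational over $k$: the first step rests on the $G(k)$--conjugacy classification of parabolic $k$--subgroups, and the second on the $R_u(B)(k)$--conjugacy of maximal $k$--tori in a connected solvable $k$--group — both standard in characteristic zero. Everything downstream of those two inputs is formal manipulation with characteristic subgroups and with identity components of intersections of tori, and I do not anticipate difficulty there.
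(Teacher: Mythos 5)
Your proposal is correct and follows essentially the same route as the paper: both directions use that $c$ normalizes $R_u(P_\Theta)$ and $Z(Z_G(S_\Theta))$ for the easy implication, and for the converse both adjust the conjugating element by a $k$--point of $R_u(P_\Theta)$ to match the maximal tori $T_iZ(Z_G(S_{\Theta_i}))$, read off $S_{\Theta_i}$ as the $k$--split part, deduce $cP_{\Theta_1}c^{-1}=P_{\Theta_2}$ and hence $\Theta_1=\Theta_2$ from the classification of standard parabolic $k$--subgroups, and recover $T_i$ as $(T_iZ(Z_G(S_\Theta))\cap[Z_G(S_\Theta),Z_G(S_\Theta)])^0$. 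The only (immaterial) difference is that you establish $\Theta_1=\Theta_2$ first via $N_G(R_u(P_\Theta))=P_\Theta$, whereas the paper reaches $cP_{\Theta_1}c^{-1}=P_{\Theta_2}$ through the chain $cS_{\Theta_1}c^{-1}=S_{\Theta_2}\Rightarrow cZ_G(S_{\Theta_1})c^{-1}=Z_G(S_{\Theta_2})$.
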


\begin{proof} 
First assume that $$\Theta_1 \,= \,\Theta_2~\,~
\text{ and }~\,~ c \,T_1\, c^{-1} \,= \,T_2\, ,$$
where $c\,\in\, P_{\Theta_1}(k)$
with $c \, Z_G ( S_{\Theta_1}) \, c^{-1} \,=\, Z_G (
S_{\Theta_1})$. Let $\Theta \,:=\,\Theta_1 \,= \,\Theta_2$.
As $c$ normalizes $R_u (P_{\Theta})$ and $Z_G (S_{\Theta})$,
we have
$$
c \, T_1 Z(Z_G(S_{\Theta_1})) R_u (P_{\Theta_1}) \, c^{-1}\,=\,
T_2 Z(Z_G(S_{\Theta_2})) R_u (P_{\Theta_2})\, .
$$
In particular, $B_{\Theta_1,T_1}$ and $B_{\Theta_2,T_2}$
are $G(k)$--conjugate.

We will now prove the converse. Assume that there is an element $c \,\in\, 
G(k)$ such that $$c \,B_{\Theta_1,T_1} \,c^{-1}\,=\,B_{\Theta_2,T_2}\, .$$ 
Then we have
$$
c\, R_u (P_{\Theta_1})\, c^{-1}\, = \,R_u (P_{\Theta_2})
~\, \text{ and }\, ~
c\, T_1 Z(Z_G(S_{\Theta_1}))\, c^{-1}\, 
=\, \beta\, T_2 Z(Z_G( S_{\Theta_2}))\, \beta^{-1}
$$
for some $\beta \,\in\, R_u (P_{\Theta_2}) (k)$.
Thus, without loss of generality, we may, and we will, assume that
\begin{equation}\label{x1}
c\, R_u (P_{\Theta_1})\, c^{-1}\, = \,R_u (P_{\Theta_2})
~\, \text{ and }\, ~
c\, T_1 Z(Z_G( S_{\Theta_1})) \, c^{-1}\, =\,T_2 Z(Z_G( S_{\Theta_2})). 
\end{equation}
But as $S_{\Theta_i}$ is the $k$--split part of the torus $T_i Z(Z_G( 
S_{\Theta_i}))$, it follows that 
$$
c\, S_{\Theta_1}\, c^{-1}\, =\, S_{\Theta_2}\, .
$$
In particular,
\begin{equation}\label{g5}
c\, Z_G(S_{\Theta_1})\, c^{-1}\, =\, Z_G(S_{\Theta_2})\, .
\end{equation}
Thus
\begin{equation}\label{f4}
c\, P_{\Theta_1}\, c^{-1}\, =\, c\, Z_G(S_{\Theta_1}) R_u (P_{\Theta_1})\, 
c^{-1}\, =\, Z_G(S_{\Theta_2}) R_u (P_{\Theta_2})\,=\,P_{\Theta_2}\, .
\end{equation}
Using \cite[p. 234, Proposition 21.12]{Bo} it follows 
that $\Theta_1 \,=\, \Theta_2$.

As before, we set $\Theta\,:=\,\Theta_1\,=\,\Theta_2$. Then from \eqref{f4} it 
follows that $c \, P_{\Theta} \, c^{-1} \,= \,P_{\Theta}$. This implies that 
$c \,\in\, P_{\Theta}(k)$. Again from \eqref{g5} we have that $c \, 
Z_G(S_{\Theta})\, c^{-1} =\, Z_G(S_{\Theta})$. Now, as $T_i$ is a maximal 
$k$--torus in $[Z_G (S_\Theta)\, ,Z_G (S_\Theta)]$, it follows that
$$T_i \,=\, (T_i Z(Z_G (S_{\Theta} )) \cap [Z_G (S_{\Theta} )\, ,Z_G 
(S_{\Theta} )])^0\, .$$
Using \eqref{x1} it follows that $c\, T_1 Z(Z_G( S_{\Theta})) \, c^{-1}\, =\, 
T_2 Z(Z_G(S_{\Theta}))$. Thus $c \, T_1 c^{-1} \,=\, T_2$. In view of 
\eqref{g5}, the proof is now complete.
\end{proof}

\section{A criterion to be in a maximal solvable subgroup}

As before, let $G$ be a semisimple affine algebraic group defined over a field 
$k$ of 
characteristic zero. If $g \,\in\, G(k)$ is semisimple or unipotent, then it is 
easy to see that $g$ lies in a connected abelian $k$--subgroup of $G$. A
connected abelian $k$--subgroup of $G$ clearly
lies in a maximal connected solvable $k$-subgroup of $G$.

However, if $g \,\in\, G(k)$ is arbitrary then it is not true in general that 
$g$ is contained in a connected abelian $k$-subgroup of $G$.
When $k \,=\, {\mathbb R}$, in \cite[Theorem 5.11]{Ch}, a sufficient condition 
is given for an element $g\, \in\, G(k)$ to lie in a $G(k)$--conjugate of 
$B_{\Theta, T}$. 
We point out that the conclusion of \cite[Theorem 5.11]{Ch} holds
for a general $k$ of characteristic zero. We have nothing new to add here other 
than noting that the transition of \cite[Section 5]{Ch} from the case of 
$k\,=\, {\mathbb R}$ to the case of a general $k$ of characteristic zero goes 
through without any difficulty.

Let $W_k$ denote the $k$--\textit{Weyl group} $N_G (S)/ Z_G (S)$.
We note that $W_k$ acts on $S$ and, in particular, induces an action on
the power set (= set of subsets) of $\Delta$.

\begin{lemma}\label{last}
Let $s$ be a semisimple element in $G(k)$. Then any maximal $k$--split torus of 
$Z_G (s)^0$ is $G(k)$--conjugate to a standard $k$--split torus 
$S_\Theta$ for some $\Theta \,\subset\, \Phi$. Moreover, if $\Theta' 
\,\subset\, \Phi$, then some $G(k)$--conjugate of the standard $k$--split torus 
$S_{\Theta'}$ is a maximal $k$--split torus of $Z_G (s)^0$ if and only if 
$\Theta$ and $\Theta'$ are $W_k(k)$--conjugate.
\end{lemma}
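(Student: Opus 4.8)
The plan is to prove Lemma \ref{last} in two stages: first the existence/conjugacy of a standard split torus arising as a maximal split torus of $Z_G(s)^0$, and then the precise $W_k$-conjugacy criterion distinguishing which $\Theta'$ can occur. For the first stage, let $S'$ be a maximal $k$-split torus of $Z_G(s)^0$. Since $s$ is semisimple, $Z_G(s)^0$ is a reductive $k$-subgroup of $G$ containing $s$ in its center; in particular $S'$ is a $k$-split torus of $G$, so by conjugacy of maximal $k$-split tori (Borel--Tits) there is $g \in G(k)$ with $gS'g^{-1} \subset S$. Replacing $s$ by $gsg^{-1}$, we may assume $S' \subset S$. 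Now $S'$ is the $k$-split part of the (reductive) group $Z_G(S')^0 \cap Z_G(s)^0 \supseteq Z_G(S')^0$; more directly, I want to identify $S'$ with some $S_\Theta$. The key observation: $s \in Z_G(S')$, so $Z_G(S') \subseteq Z_G(s)$, hence $Z_G(S')^0 \subseteq Z_G(s)^0$, and since $S'$ is a maximal $k$-split torus of the latter while it is central in $Z_G(S')^0$, $S'$ must be exactly the $k$-split part of $Z(Z_G(S')^0)^0$. Since $S' \subset S$, the group $Z_G(S')$ is the Levi of a standard parabolic, i.e.\ $S'$ is $W_k$-conjugate to some $S_\Theta$; one uses here the standard fact (from the theory of the relative root system, \cite[Section 21.11]{Bo}) that the centralizers of subtori of $S$ are, up to $W_k$, exactly the $Z_G(S_\Theta)$. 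Chasing an element of $N_G(S)(k)$ realizing this Weyl element gives a $G(k)$-conjugate of $S'$ equal to $S_\Theta$, completing stage one.

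For the second stage, suppose $S_\Theta$ (obtained above) and $S_{\Theta'}$ both occur, i.e.\ there exist $g_1, g_2 \in G(k)$ with $g_i S_{\Theta_i} g_i^{-1}$ a maximal $k$-split torus of $Z_G(s)^0$ (writing $\Theta_1 = \Theta$, $\Theta_2 = \Theta'$). Then $g_2^{-1} g_1 \cdot g_1 S_\Theta g_1^{-1} \cdot \ldots$ — more cleanly: both $g_1 S_\Theta g_1^{-1}$ and $g_2 S_{\Theta'} g_2^{-1}$ are maximal $k$-split tori of the reductive $k$-group $Z_G(s)^0$, hence conjugate by an element $h \in Z_G(s)^0(k) \subseteq G(k)$. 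Therefore $h g_1 S_\Theta g_1^{-1} h^{-1} = g_2 S_{\Theta'} g_2^{-1}$, so $S_\Theta$ and $S_{\Theta'}$ are $G(k)$-conjugate via $c := g_2^{-1} h g_1 \in G(k)$. Now I invoke the standard rigidity statement for standard split tori: if two subsets $\Psi, \Psi' \subseteq \Phi$ have $S_\Psi$ and $S_{\Psi'}$ conjugate under $G(k)$, then in fact they are conjugate under $N_G(S)(k)$, i.e.\ under $W_k(k)$. This follows from conjugacy of maximal $k$-split tori of $G$ together with the description of $N_G(S_\Psi)$ relative to $S$: any $G(k)$-conjugacy between $S_\Psi$ and $S_{\Psi'}$ can be adjusted by an element of $Z_G(S_\Psi)(k) \cdot Z_G(S_{\Psi'})(k)$ so that it carries $S$ to $S$, hence lies in $N_G(S)(k)$. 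That gives the ``only if'' direction; the ``if'' direction is immediate, since if $w \in W_k(k)$ with $w S_\Theta w^{-1} = S_{\Theta'}$ and $\widetilde w \in N_G(S)(k)$ represents $w$, then $\widetilde w^{-1} g_1^{-1} \cdot \bigl(g_1 S_\Theta g_1^{-1}\bigr) \cdot \ldots$ transports the maximal split torus structure: $g_1 \widetilde w S_{\Theta'} \widetilde w^{-1} g_1^{-1} = g_1 S_\Theta g_1^{-1}$ is a maximal $k$-split torus of $Z_G(s)^0$, so $S_{\Theta'}$ works via the element $g_1 \widetilde w \in G(k)$.

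I expect the main obstacle to be the careful bookkeeping in stage one, namely verifying that the maximal $k$-split torus $S' \subset S$ of $Z_G(s)^0$ is genuinely $W_k$-conjugate to a standard $S_\Theta$, rather than merely contained in $S$. The subtlety is that a sub-$k$-split-torus of $S$ need not itself be of the form $S_\Theta$ on the nose — one only gets this up to the action of $W_k$ on $S$ — and one must ensure the Weyl element used is represented by a $k$-rational point of $N_G(S)$, which is fine because $N_G(S)/Z_G(S) = W_k$ is defined using $k$-points and every class in $W_k$ has a representative in $N_G(S)(k)$ (indeed $N_G(S)(k)$ surjects onto $W_k$, since $Z_G(S)$ is connected so $H^1$-obstructions do not intervene here). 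A secondary point to handle cleanly is that $S'$, being maximal $k$-split in $Z_G(s)^0$ and central in $Z_G(S')^0 \subseteq Z_G(s)^0$, must equal the maximal $k$-split subtorus of $Z(Z_G(S')^0)^0$; combined with the identity \eqref{A}-type description this pins down $Z_G(S') = Z_G(S_\Theta)$ up to $W_k$, which is exactly what stage two then exploits. Everything else — the conjugacy of maximal $k$-split tori inside $Z_G(s)^0$, and the reduction of a $G(k)$-conjugacy of standard tori to a $W_k(k)$-conjugacy — is a direct application of Borel--Tits and the results on parabolics recalled in Section \ref{sec-notation}.
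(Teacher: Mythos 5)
Your overall strategy coincides with the paper's. For the first part you supply an argument that the paper simply imports from \cite[Corollary 5.7]{Ch}, and your second part rests on exactly the two ingredients the paper uses: conjugacy of maximal $k$--split tori of the reductive $k$--group $Z_G(s)^0$, and the fact that a $G(k)$--conjugation between subtori of $S$ can be realized by an element of $N_G(S)(k)$ (the paper cites \cite[Corollaire 4.22]{BT2} for this; you reprove it by observing that $S$ and $cSc^{-1}$ are both maximal $k$--split tori of $Z_G(S_{\Psi'})$, which is a correct argument). Both you and the paper leave implicit the final combinatorial step that an element of $N_G(S)(k)$ carrying $S_\Theta$ to $S_{\Theta'}$ yields a Weyl element carrying the subset $\Theta$ to $\Theta'$, so I will not press on that.

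There is, however, one incorrect inference in your stage one: from $s \in Z_G(S')$ you conclude $Z_G(S') \subseteq Z_G(s)$ and hence $Z_G(S')^0 \subseteq Z_G(s)^0$. This is false in general --- membership of $s$ in a centralizer does not make that centralizer centralize $s$ (for instance, if $Z_G(s)^0$ is $k$--anisotropic then $S'=\{e\}$ and $Z_G(S')=G$, which certainly need not centralize $s$). What you actually need, and what is true, is the weaker containment $Z(Z_G(S'))^0 \subseteq Z_G(s)^0$: since $Z_G(S')$ is connected (it is the centralizer of a torus) and contains $s$, every element of its center commutes with $s$. This corrected containment still gives your conclusion that $S'$ equals the $k$--split part of $Z(Z_G(S'))^0$, by maximality of $S'$ among $k$--split tori of $Z_G(s)^0$, and the remainder of your argument then goes through unchanged. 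So the proof is correct after this one--line repair.
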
 

\begin{proof}
This lemma is proved in \cite{Ch} under the assumption that $k \,=\, {\mathbb 
R}$ (see \cite[Corollary 5.7]{Ch}). The proof of the first part of the lemma is 
exactly identical to the proof of the first part in \cite[Corollary 5.7]{Ch}; 
we just need to replace ${\mathbb R}$ by $k$.

For the proof of the second part, we need a bit more justification than that
is given in \cite[Corollary 5.7]{Ch}. Let $\Theta\, , \Theta'\,\subset\,\Phi$ 
be such that some $G(k)$--conjugates of both $S_{\Theta}$ and $S_{\Theta'}$ are 
maximal $k$--split tori of $Z_G (s)^0$. Then by conjugacy of maximal $k$--split 
tori it follows that there is an element $c\,\in\, Z_G (s)^0 (k)$ such that 
$$
c S_{\Theta} c^{-1} \,=\, S_{\Theta'}\, . 
$$
As $S_{\Theta}\, ,S_{\Theta'}$ are both subtori of $S$, using \cite[Corollary 
4.22]{BT2}, we conclude that there is an element $a\,\in\, N_G (S) (k)$ such 
that 
$$
a x a^{-1} \,=\, cx c^{-1}, ~ \,\, 
\forall x \,\in \, S_{\Theta}\, .
$$
In particular, $\Theta$ and $\Theta'$ are $W_k (k)$--conjugate.
\end{proof}

Take any $s\, \in\, G(k)$.
Let $\Theta \,\subset \,\Phi$ be such that some $G(k)$--conjugate 
of the $k$--split torus $S_\Theta$ is a maximal $k$--split torus of
$Z_G (s)^0$. Then $s$ is said to be of \textit{type} $\Theta$; see
\cite[Definition 5.8]{Ch}. If $s$ is of type $\Theta$, then note that $\Theta$ 
is necessarily an admissible subset of $\Phi$.

Assume that $s$ is of type $\Theta$. Take
any $\Theta' \,\subset \,\Phi$. Then from Lemma \ref{last} it follows that
$s$ is of type $\Theta'$ if and only if $\Theta$ and $\Theta'$ are $W_k 
(k)$--conjugate.

\begin{theorem}\label{C}
Take any $g \,\in\, G(k)$. Let $g_s$ be the semisimple part of $g$.
Assume that $g_s$ is of type $\Theta$ (in particular,
$\Theta$ is admissible). Then there is an element $c \,\in\, G(k)$, and a 
maximal $k$--torus $T$ of $[Z_G (S_\Theta)\, , Z_G (S_\Theta)]$ which 
is $k$-anisotropic, such that
$$
cgc^{-1} \, \in\, B_{\Theta,T}\, .
$$
In particular, $G(k)$ is the union of all $G(k)$--conjugates of $B_{\Theta,T} 
(k)$ and all maximal $k$--tori of $[Z_G (S_\Theta), Z_G (S_\Theta)]$ which are 
$k$--anisotropic, where $\Theta$ runs over all admissible subsets of $\Phi$.
\end{theorem}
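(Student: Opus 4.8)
The plan is to reduce the statement to the Jordan decomposition of $g$ together with the already-established conjugacy results. First I would write $g \,=\, g_s g_u$ for the Jordan decomposition of $g$ in $G(k)$, where $g_s$ is semisimple and $g_u$ is unipotent; since the characteristic is zero and $g \,\in\, G(k)$, both $g_s$ and $g_u$ lie in $G(k)$ and they commute, so in particular $g_u \,\in\, Z_G(g_s)(k)$ and hence $g_u \,\in\, Z_G(g_s)^0(k)$.

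Next I would exploit the hypothesis that $g_s$ is of type $\Theta$. By definition this means that some $G(k)$--conjugate of $S_\Theta$ is a maximal $k$--split torus of $Z_G(g_s)^0$; after conjugating $g$ (hence $g_s$ and $g_u$) by the appropriate element of $G(k)$, I may assume $S_\Theta$ itself is a maximal $k$--split torus of $H \,:=\, Z_G(g_s)^0$. Then $H$ is a connected reductive $k$--group, $g_s$ lies in its center, and $S_\Theta$ is a maximal $k$--split torus of $H$. The unipotent element $g_u$ lies in $H(k)$, so $g_u$ is contained in $R_u(Q)(k)$ for some minimal parabolic $k$--subgroup $Q$ of $H$ with $S_\Theta \,\subset\, Q$; adjusting by a further element of $H(k) \,\subset\, G(k)$ I may arrange that $g_u$ lies in the unipotent radical of the standard minimal $k$--parabolic of $H$ determined by $S_\Theta$. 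Crucially, this minimal parabolic of $H$ is contained in $P_\Theta$ and its unipotent radical is contained in $R_u(P_\Theta)$: indeed the roots of $H$ with respect to $S_\Theta$ are exactly those $k$--roots of $G$ lying in $\mathbb{Z}\cdot\Theta$, and the positive non-$\mathbb{Z}\Theta$ root spaces constitute $R_u(P_\Theta)$, so any unipotent subgroup of $H$ built from positive root spaces of $H$ sits inside $Z_G(S_\Theta) \cap$ (a parabolic of $H$) and its unipotent part lands in $R_u(P_\Theta)$. Thus after these conjugations $g_u \,\in\, R_u(P_\Theta)(k)$.

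It remains to place $g_s$. Since $g_s$ is semisimple and central in $H \,=\, Z_G(g_s)^0$, and $S_\Theta$ is a maximal $k$--split torus of $H$, the element $g_s$ lies in $Z_H(S_\Theta) \,=\, Z_G(S_\Theta)$ (the last equality because $H \,=\, Z_G(g_s)^0$ contains $S_\Theta$ and centralizing $S_\Theta$ inside $H$ is the same as centralizing it in $G$ once we remember $g_s$ is central in $H$). Now $g_s \,\in\, Z_G(S_\Theta)(k)$ is semisimple, so it lies in some maximal $k$--torus of $Z_G(S_\Theta)$ defined over $k$; by conjugating once more by an element of $Z_G(S_\Theta)(k)$ — which normalizes both $S_\Theta$ and $R_u(P_\Theta)$ and fixes $g_u$'s membership in $R_u(P_\Theta)$ up to a harmless further $R_u(P_\Theta)(k)$--adjustment — I may assume $g_s$ lies in the maximal $k$--torus $TZ(Z_G(S_\Theta))$, where $T$ is a maximal $k$--anisotropic torus of $[Z_G(S_\Theta),Z_G(S_\Theta)]$ (such $T$ exists because $\Theta$ is admissible, and every maximal $k$--torus of $Z_G(S_\Theta)$ of the form anisotropic-semisimple-part times center is $Z_G(S_\Theta)(k)$--conjugate to one of this shape; here I would invoke the decomposition $Z_G(S_\Theta) \,=\, [Z_G(S_\Theta),Z_G(S_\Theta)]\cdot Z(Z_G(S_\Theta))$ and \eqref{A}). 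Then $g_s \,\in\, TZ(Z_G(S_\Theta))$ and $g_u \,\in\, R_u(P_\Theta)$, so $cgc^{-1} \,=\, g_s g_u \,\in\, TZ(Z_G(S_\Theta))R_u(P_\Theta) \,=\, B_{\Theta,T}$ for the composite conjugating element $c \,\in\, G(k)$. The final "in particular" sentence then follows: an arbitrary $g \,\in\, G(k)$ has semisimple part $g_s$ of some type $\Theta$, which is automatically admissible, and by the above $g$ is $G(k)$--conjugate into $B_{\Theta,T}(k)$ for a suitable anisotropic $T$; conversely when $g_u \,=\, e$ and one only wants $g_s$ itself, $g_s$ is conjugate into such a $T$ directly.

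I expect the main obstacle to be the careful bookkeeping showing that the three successive conjugations — first making $S_\Theta$ a maximal split torus of $Z_G(g_s)^0$, then pushing $g_u$ into $R_u(P_\Theta)$, then pushing $g_s$ into $TZ(Z_G(S_\Theta))$ — can be performed so that each step does not destroy the conclusions of the previous ones. The key point that makes this work is that every conjugation after the first is by an element of $Z_G(S_\Theta)(k)$ or $R_u(P_\Theta)(k)$, both of which normalize $R_u(P_\Theta)$ and $Z_G(S_\Theta)$, hence preserve the relevant containments; verifying that the maximal $k$--split torus of $Z_G(g_s)^0$ really does sit inside $Z_G(S_\Theta)$ after a Levi-type adjustment, and that a unipotent element of a reductive group over $k$ lies in the unipotent radical of a minimal $k$--parabolic, are the two facts that need the most care but are standard (the latter via \cite[Proposition 3.1]{BT} or the structure theory of reductive $k$--groups).
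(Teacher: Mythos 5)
The paper itself gives no proof of Theorem \ref{C}: it defers entirely to \cite[Theorem 5.11]{Ch} and asserts that the argument there transfers from $k=\mathbb{R}$ to any $k$ of characteristic zero. Your outline follows the same broad strategy as that reference (Jordan decomposition, normalising a maximal $k$--split torus of $Z_G(g_s)^0$ to $S_\Theta$, Borel--Tits to push $g_u$ into $R_u(P_\Theta)$), but as written it has one genuine gap and one incorrect assertion.

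The gap is in the final step, where you place $g_s$. You claim that after one more conjugation by $Z_G(S_\Theta)(k)$ you may assume $g_s\in TZ(Z_G(S_\Theta))$ with $T$ a maximal $k$--anisotropic torus of $[Z_G(S_\Theta),Z_G(S_\Theta)]$, justifying this only by the existence of such $T$ (admissibility) and conjugacy of tori ``of this shape''. But maximal $k$--tori of $Z_G(S_\Theta)$ are not all $Z_G(S_\Theta)(k)$--conjugate, and whether the derived part of the maximal $k$--torus through $g_s$ is $k$--anisotropic is a conjugation invariant, so no conjugation can repair this if it fails. What is needed here --- and what your write-up never invokes at this stage --- is the maximality of $S_\Theta$ as a $k$--split torus of $Z_G(g_s)^0$: if $T'$ is a maximal $k$--torus of $Z_G(S_\Theta)$ containing $g_s$ and $T'':=(T'\cap[Z_G(S_\Theta),Z_G(S_\Theta)])^0$ had a nontrivial $k$--split subtorus $A$, then $A\subset T'\subset Z_G(g_s)^0$ and $A$ meets $S_\Theta\subset Z(Z_G(S_\Theta))^0$ in a finite group, so $AS_\Theta$ would be a $k$--split torus of $Z_G(g_s)^0$ strictly larger than $S_\Theta$ --- a contradiction. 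This forces $T''$ to be $k$--anisotropic, and you can simply take $T=T''$ with no further conjugation; the argument is the same split-part device used via Lemma \ref{lem3} in the proof of Theorem \ref{thm2}.

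Separately, your justification that the unipotent radical of the minimal $k$--parabolic $Q$ of $H=Z_G(g_s)^0$ lands in $R_u(P_\Theta)$ misstates the root data: the $k$--roots of $G$ lying in $\mathbb{Z}\cdot\Theta$ are exactly those vanishing on $S_\Theta$ (they are the roots of $Z_G(S_\Theta)$), so they cannot be the roots of $H$ with respect to $S_\Theta$. The correct statement is that the nonzero weights of $S_\Theta$ on $\mathrm{Lie}(H)$ are restrictions of roots in $\Delta\setminus\mathbb{Z}\cdot\Theta$, and choosing the positive system on these induced by $\Delta^+$ puts $\mathrm{Lie}(R_u(Q))$ inside $\bigoplus_{\alpha\in\Delta^+\setminus\mathbb{Z}\cdot\Theta}\mathrm{Lie}(G)_\alpha=\mathrm{Lie}(R_u(P_\Theta))$, which is the inclusion you want. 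With these two repairs your argument goes through and matches the intended proof from \cite{Ch}.
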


Theorem \ref{C} was proved in \cite{Ch} for $k \,=\, {\mathbb R}$ (see 
\cite[Theorem 5.11]{Ch}). The proof of Theorem 5.11 in \cite{Ch} works for 
any $k$ of characteristic zero. Hence we omit the proof.

\medskip
\noindent \textbf{Acknowledgements.}\,
The first-named author thanks KFUPM for funding Research Project IN101026. The 
third-named author thanks School of Mathematics, Tata Institute of Fundamental Research for its 
hospitality.


\begin{thebibliography}{AAA}

\bibitem[BT1]{BT} A. Borel and J. Tits, \'El\'ements unipotents et 
sous-groupes paraboliques de groupes r\'eductifs. I, \textit{Invent.
Math.} \textbf{12} (1971), 95--104.

\bibitem[BT2]{BT2} A. Borel and J. Tits, Groupes r\'eductifs, \textit{Inst. 
Hautes \'Etudes Sci. Publ. Math.} \textbf{27} (1965), 55--150.

\bibitem[Bo]{Bo} A. Borel, \textit{Linear algebraic groups},
Graduate Texts in Mathematics, 126. Springer-Verlag, New York, 1991.

\bibitem[Ch]{Ch} P. Chatterjee, Surjectivity of power maps of real
algebraic groups, \textit{Adv. Math.} \textbf{226} (2011), 4639--4666.

\bibitem[DT]{DT} D.Z. Djokovi\'c and N.Q. Thang, Conjugacy classes of maximal tori 
in semisimple real algebraic groups and applications, \textit{Can. Jour. Math.} 
\textbf{46} (1994), 699--717.

\bibitem[Ma]{Ma} H. Matsumoto, Quelques remarques sur les groupes de Lie 
alg\'ebriques r\'eels, \textit{Jour. Math. Soc. Japan} \textbf{16} (1964), 
419--446.

\bibitem[Mo]{Mo} G.D. Mostow, On maximal subgroups of real Lie groups,
\textit{Ann. of Math.} \textbf{73} (1961), 20--48.

\bibitem[Pl]{P} V.P. Platonov, Proof of the finiteness hypothesis for
the solvable subgroups of algebraic groups,
\textit{Sibirsk. Mat. Z.} \textbf{10} (1969), 1084--1090.


\end{thebibliography}
\end{document}